\newtheorem{thm}{Theorem}
\newtheorem{lem}[thm]{Lemma}
\newtheorem{cor}[thm]{Corollary}
\theoremstyle{definition}
\theoremstyle{remark}
\numberwithin{equation}{section}
\newcommand{\GL}{{\operatorname{GL}}}
\def\HJ#1{\par\medskip\noindent{\bf#1.}\bgroup\it \ }
\def\EHJ{\egroup}
\begin{document}

\title{A note on larger orbit size}

\author{Ping Jin and Yong Yang}

\address{School of Mathematical Sciences, Shangxi University, Taiyuan, 030006, China.}
\email{jinping@sxu.edu.cn}

\address{Department of Mathematics, Texas State University, 601 University Drive, San Marcos, TX 78666, USA and Key Laboratory of Group and Graph Theories and Applications, Chongqing University of Arts and Sciences, Chongqing, China.}
\email{yang@txstate.edu}

\date{}

\begin{abstract}
In this note, we present an improvement on the large orbit result of Halasi and Podoski,
and then answer an open question raised by  Chen, Cossey, Lewis, and Tong-Viet in ~\cite{CCLT}.
\end{abstract}

\maketitle

In ~\cite[Theorem 3.6]{CCLT}, Chen, Cossey, Lewis, and Tong-Viet
considered the degrees of all the irreducible $\pi$-partial characters of $\pi$-separable groups and proved the following.

\begin{thm}\label{pi}
Let $\pi$ be a set of primes, and let $G$ be a finite $\pi$-separable group. If $\varphi(1)^2$ divides $|G : \ker \varphi|$ for all $\varphi \in I_{\pi} (G)$, then one of the following occurs:

{\rm(1)} $G$ has a normal Hall $\pi$-complement $N$ and $G/N$ is nilpotent.

{\rm(2)} There is a prime $p \in \pi$ and a $\pi'$-group $K$ with a faithful module $V$ of characteristic $p$ so that $|C_K(v)| \geq \sqrt{|K|} $ for all $v \in V$ and $G$ has a section isomorphic to $VK$.
\end{thm}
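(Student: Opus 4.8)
The plan is to argue by induction on $|G|$, taking $G$ to be a minimal counterexample and deriving a contradiction in every branch. First I would record how the hypothesis propagates: for $N \trianglelefteq G$, each $\varphi \in \I(G/N)$ inflates to a member of $\I(G)$ with the same degree and with $|G:\ker\varphi| = |(G/N):\ker\varphi|$, so the divisibility hypothesis is inherited by every quotient verbatim. Moreover, if a proper section of $G$ were isomorphic to a group $VK$ as in (2), that same $VK$ would be a section of $G$, so $G$ itself would satisfy (2); hence no proper section of $G$ satisfies (2). Applying induction to each proper quotient (legitimate, since the hypothesis is inherited) and discarding the impossible alternative (2), I conclude that every proper quotient of $G$ satisfies (1). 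A short reduction then shows that a minimal normal subgroup $N$ cannot be a $\pi'$-group: if it were, a normal Hall $\pi$-complement of $G/N$ would pull back, together with $N$, to a normal $\pi'$-subgroup of $G$ with nilpotent $\pi$-quotient, i.e.\ conclusion (1). Thus $N$ is an elementary abelian $p$-group with $p\in\pi$. Feeding (1) for $G/N$ back in produces $N_0 \trianglelefteq G$ with $N \le N_0$, where $N_0/N$ is the normal Hall $\pi$-complement of $G/N$ and $G/N_0$ is a nilpotent $\pi$-group; writing $N_0 = N \rtimes H$ with $H$ a Hall $\pi'$-subgroup, I pass to the faithful quotient $K := H/C_H(N)$ acting on $V := N$.

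The heart of the proof is translating the divisibility hypothesis into an orbit condition for $K$ on $V$. Using the $B_\pi$/$I_\pi$ machinery of Isaacs over the abelian normal subgroup $N$, each nontrivial $\lambda \in \Irr(N)$ has a canonical extension to its inertia group (coprimeness of $|N|$ and $|H|$), and the associated $\varphi \in \I(G)$ lying over $\lambda$ has degree whose $\pi'$-part is the $K$-orbit size $|K:K_\lambda|$, where $K_\lambda = C_K(\lambda)$. Since $|K:K_\lambda|$ is a $\pi'$-number while $|V|$ is a $\pi$-number, coprimality lets the condition $\varphi(1)^2 \mid |G:\ker\varphi|$ collapse to its $\pi'$-part, namely $|K:K_\lambda|^2 \mid |K|$, which rearranges to $|K| \mid |K_\lambda|^2$, i.e.\ $|C_K(\lambda)| \ge \sqrt{|K|}$. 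Dualising $V \leftrightarrow \Irr(V)$ (orbit sizes match) this reads $|C_K(v)| \ge \sqrt{|K|}$ for every $v \in V$. Consequently, if $K \neq 1$ then the section $VK$ of $G$ is exactly of the type described in (2), contradicting the fact that no section of $G$ satisfies (2). Therefore $K = 1$, so $H$ centralises $N$; then $N_0 = N \times H$, and $H$, being the Hall $\pi'$-subgroup, is characteristic in $N_0 \trianglelefteq G$ and hence normal in $G$. Thus $H$ is a normal Hall $\pi$-complement of $G$.

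It remains to prove that $G/H$ is nilpotent, which would complete conclusion (1). Now $G/H$ is a $\pi$-group, so $\I(G/H) = \Irr(G/H)$ and the inherited hypothesis reads $\chi(1)^2 \mid |(G/H):\ker\chi|$ for all $\chi \in \Irr(G/H)$. For this I would invoke (or prove directly) the companion fact that a $\pi$-group satisfying this ordinary degree condition is nilpotent: if it were not, it would have a non-central chief factor on which some element of prime order acts nontrivially, and inducing an appropriate linear character over that factor yields a $\chi$ with $\chi(1)^2 \nmid |G:\ker\chi|$ — the smallest instance being an $S_3$-section, whose two-dimensional character has $\chi(1)^2 = 4 \nmid 6 = |S_3|$. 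This forces $G/H$ nilpotent, giving (1) for $G$, and hence the desired contradiction in this branch as well.

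The step I expect to be the main obstacle is the arithmetic in the second paragraph: identifying the $\pi'$-part of $\varphi(1)$ with the $K$-orbit size and, crucially, pinning down $|G:\ker\varphi|$ precisely enough that the collapse to $|K:K_\lambda|^2 \mid |K|$ is an \emph{equivalence}, not merely an implication. This needs the full force of the canonical-extension and Glauberman-type correspondences for $\pi$-partial characters, together with control of how the nilpotent $\pi$-group $G/N_0$ sitting on top contributes only $\pi$-factors to both $\varphi(1)$ and the index (so that these wash out of the $\pi'$-part), and a check that $\ker\varphi$ absorbs $C_H(N)$ so that the $\pi'$-part of the index is exactly $|K|$. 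One must also verify that letting $\varphi$ range over all nontrivial $\lambda$ recovers the bound for \emph{every} $v\in V$. The other essential input is the improved large-orbit theorem proved in this paper (sharpening Halasi–Podoski), which is what makes the large-centralizer alternative $|C_K(v)| \ge \sqrt{|K|}$ for all $v$ a genuinely exceptional configuration and thereby shows that conclusion (2) is both forced and best possible.
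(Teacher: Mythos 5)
First, a point of orientation: the paper you were given does not prove Theorem \ref{pi} at all. It is quoted verbatim from Theorem 3.6 of ~\cite{CCLT}; the note's only contribution is to show \emph{afterwards}, via the sharpened Halasi--Podoski bound (Corollary \ref{new}), that alternative (2) can never occur, which is Theorem \ref{pinew}. So your closing sentence has the logic backwards: the improved large-orbit theorem is not an ingredient of Theorem \ref{pi}, and alternative (2) is not ``best possible'' --- the entire point of the note is that (2) is vacuous. Your proposal must therefore be judged as a reconstruction of the proof in ~\cite{CCLT}, and there it has one genuine gap.

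The gap is the sentence ``Thus $N$ is an elementary abelian $p$-group with $p\in\pi$.'' A $\pi$-separable group need not be $\pi$-solvable: when $2\in\pi$, a minimal normal $\pi$-subgroup can be a direct product of nonabelian simple $\pi$-groups (for instance copies of $A_5$ when $\{2,3,5\}\subseteq\pi$). Your reduction rules out the case that $N$ is a $\pi'$-group, but it provides no mechanism at all for a nonabelian $\pi$ chief factor: there is then no module $V$, no coprime action, and the Clifford-theoretic arithmetic of your second paragraph does not apply. This is not a removable technicality; handling nonabelian chief factors is precisely where ~\cite{CCLT} needs CFSG-dependent information about simple groups (the ``blocks of small defect in alternating groups'' in their title, i.e.\ irreducible Brauer characters of simple groups whose degrees have large $p$-parts), and it is the reason their conclusion carries the awkward alternative (2) at all. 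A correct proof must either quote such simple-group results or redo that work.

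The rest of your outline is sound and, with two repairs, would handle the abelian case. (i) Your worries about $\ker\varphi$ ``absorbing'' $C_H(N)$, and about needing an equivalence rather than an implication, dissolve if you first observe that $C_{N_0}(N)=N\times C_H(N)$ is normal in $G$, so $C_H(N)$ is a normal $\pi'$-subgroup of $G$; but a minimal counterexample has $O_{\pi'}(G)=1$, since otherwise conclusion (1) for $G/O_{\pi'}(G)$ pulls back to (1) for $G$ exactly as in your $\pi'$-case reduction. Hence $K=H$ acts faithfully on $V=N$, and only a one-way estimate is needed: Clifford theory for $\pi$-partial characters gives that $|K:C_K(\lambda)|$ divides $\varphi(1)_{\pi'}$, while $(\varphi(1)_{\pi'})^2$ divides $|G:\ker\varphi|_{\pi'}\le |G|_{\pi'}=|K|$; then transfer from $\Irr(V)$ to $V$ by the standard fact that coprime actions on $V$ and on $\Irr(V)$ are permutation isomorphic. (ii) Your final step is exactly the Gagola--Lewis theorem ($G$ is nilpotent if and only if $\chi(1)^2$ divides $|G:\ker\chi|$ for all $\chi\in\Irr(G)$); citing it is legitimate, but your sketched ``direct proof'' via an $S_3$-section is not a proof --- kernels of induced characters cannot be controlled that casually, and that theorem is a genuine piece of work, not a one-line observation.
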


At the end of that paper, the authors also conjectured that case (2) cannot happen. In this short note, we confirm that this is the case.

For our work, we need Theorem 1.1 and Corollary 1.4 of ~\cite{HalasiPodoski},
which we restate here for convenience.

\begin{thm}\label{H-P}
Let $V$ be a finite vector space and $G \leq \GL(V)$ be a coprime linear group. Then there exist $v$, $w \in V$ such that $C_G(v) \cap C_G(w) = 1$.
\end{thm}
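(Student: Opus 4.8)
The plan is to recast the statement as a regular-orbit problem and then induct on $\dim V$, peeling off the reducible and imprimitive cases until only primitive groups remain. For $v,w\in V$ one has $C_G(v)\cap C_G(w)=\mathrm{Stab}_G(v,w)$, the stabilizer of the pair under the diagonal action of $G$ on $V\oplus V$; thus the theorem asserts exactly that a coprime linear group has a regular orbit on $V\oplus V$, that is, a base of size $2$. The coprimality hypothesis $\gcd(|G|,|V|)=1$ gives, through Maschke's theorem, that every $G$-submodule of $V$ has a $G$-invariant complement, and it is this complete reducibility that drives the reduction below.

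First I would dispose of the reducible case. Suppose $V=A\oplus B$ with $A,B$ nonzero $G$-submodules, and put $K=C_G(A)$. Then $G/K$ acts faithfully and coprimely on $A$, while $K$ acts faithfully and coprimely on $B$ because $K\cap C_G(B)\le C_G(A)\cap C_G(B)=1$; both modules have dimension smaller than $\dim V$. By induction I choose $a,a'\in A$ with $\mathrm{Stab}_G(a)\cap\mathrm{Stab}_G(a')=K$ and then $b,b'\in B$ with $K\cap\mathrm{Stab}_G(b)\cap\mathrm{Stab}_G(b')=1$. Setting $v=(a,b)$ and $w=(a',b')$ gives
\[
C_G(v)\cap C_G(w)=\mathrm{Stab}_G(a)\cap\mathrm{Stab}_G(a')\cap\mathrm{Stab}_G(b)\cap\mathrm{Stab}_G(b')=1 ,
\]
as required. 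An analogous argument built on the wreath-product structure of an induced module reduces the irreducible case to the primitive one, so henceforth I may assume that $G$ acts faithfully, irreducibly and primitively.

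The primitive case is the real content, and I would attack it by counting. A pair $(v,w)$ violates the conclusion precisely when $v$ and $w$ both lie in $C_V(g)$ for some $g\ne 1$ of prime order, so it suffices to prove
\[
\sum_{1\neq g\in G}|C_V(g)|^2<|V|^2 .
\]
Coprimality gives $V=C_V(g)\oplus[V,g]$ for each $g$, which controls $\dim C_V(g)$; the guiding principle is that elements with a large fixed space are rare, so the weighted sum should stay below $|V|^2$. To turn this into a proof I would use the structure theory of primitive coprime linear groups---a cyclic group acting on a tensor decomposition, together with normal subgroups of symplectic type---and bound fixed-point ratios one tensor factor at a time.

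The hard part will be exactly those symplectic-type configurations, where $G$ is genuinely large relative to $|V|$ (the groups assembled from $\mathrm{Sp}(2m,r)$ acting on extraspecial modules). There the crude estimate $(|G|-1)|V|<|V|^2$ is worthless and the displayed inequality can be tight or even fail, so one must either feed in sharp fixed-point-ratio bounds for the almost-simple constituents---which ultimately rest on the classification of finite simple groups---or argue directly that a regular orbit on $V\oplus V$ persists even when $G$ has no regular orbit on $V$ alone. Pinning down this short list of exceptional primitive groups, rather than the generic counting, is where essentially all of the difficulty lies.
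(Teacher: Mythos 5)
First, a point of order: this note does not prove Theorem \ref{H-P} at all. It is quoted verbatim from Halasi and Podoski \cite{HalasiPodoski} (their Theorem 1.1) and used as a black box; the note's own contributions are Lemma \ref{isaacs} and Corollary \ref{new}, which take Theorem \ref{H-P} as given. So your proposal can only be measured against the original proof, which occupies most of a thirty-page paper.

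Measured that way, you have reproduced the right framework but not the proof. The reformulation as a regular orbit of $G$ on $V\oplus V$ and the Clifford-theoretic induction are indeed the standard skeleton, and your reducible step is correct: for $G$-submodules $A,B$ with $V=A\oplus B$ one has $C_G(a+b)=C_G(a)\cap C_G(b)$, so inducting first with $G/C_G(A)$ on $A$ and then with $C_G(A)$ on $B$ works. The first genuine gap is the sentence claiming an ``analogous argument'' handles imprimitivity. It does not: if $V=V_1\oplus\cdots\oplus V_n$ is a system of imprimitivity, the stabilizer of a pair of vectors contains block-permuting elements, and coprimality gives no control over the image of $G$ in $S_n$ beyond its being a $p'$-group (its order can well share factors with $n$). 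One must choose the component pairs $(v_i,w_i)$ so that they simultaneously form bases within the blocks and distinguish the blocks from one another; this coloring problem is delicate precisely when the $V_i$ are small, and Halasi and Podoski devote a substantial portion of their paper to it. The second and larger gap is the primitive case, which you yourself flag as unresolved: for the symplectic-type and quasisimple normal-subgroup configurations the counting inequality $\sum_{1\ne g\in G}|C_V(g)|^2<|V|^2$ that your strategy requires is exactly what can fail, and you offer no replacement beyond naming what would be needed (sharp fixed-point-ratio bounds, or a direct argument for a list of exceptional groups). Since that analysis is the actual content of the theorem, what you have written is an accurate road map with the destination missing, not a proof.
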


\begin{cor}\label{D-N}
If $G$ is a finite group acting faithfully on a finite group $K$ and $(|G|,|K|)=1$,
then there exists $x\in K$ such that $|C_G(x)|\le \sqrt{|G|}$.
\end{cor}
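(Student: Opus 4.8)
The plan is to deduce the centralizer bound from a ``base of size two'' statement and then to reduce the action on a general group $K$ to the linear situation covered by Theorem~\ref{H-P}. First observe the elementary index inequality: for any $x,y\in K$ one has $|G:C_G(x)\cap C_G(y)|\le |G:C_G(x)|\,|G:C_G(y)|$. Hence if we can produce $x,y\in K$ with $C_G(x)\cap C_G(y)=1$, then $|G|\le |G:C_G(x)|\,|G:C_G(y)|$, so one of the two indices, say $|G:C_G(x)|$, is at least $\sqrt{|G|}$, which gives $|C_G(x)|\le \sqrt{|G|}$ as required. Thus the entire corollary follows once we establish that a faithful coprime action admits such a pair, and when $K=V$ is a vector space this is exactly Theorem~\ref{H-P}; the task is to pass from modules to arbitrary groups.

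To make this reduction I would pass to the generalized Fitting subgroup: standard coprime-action facts give $C_G(F^*(K))=1$, so $G$ already acts faithfully on $F^*(K)=F(K)E(K)$, and it suffices to find the pair inside $F^*(K)$. Suppose first that $K$ is solvable, so that $F^*(K)=F(K)$ is nilpotent, and write its Sylow decomposition as $\prod_p K_p$. Set $V_p=K_p/\Phi(K_p)$, a vector space over $\mathbb{F}_p$. Then $\rho\colon F(K)\to V:=\prod_p V_p$ is a surjective $G$-homomorphism, and since an automorphism of coprime order acting trivially on $P/\Phi(P)$ acts trivially on $P$, faithfulness is preserved, i.e. $\bigcap_p C_G(V_p)=1$. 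The hypothesis $(|G|,|K|)=1$ makes each $\overline{G}_p=G/C_G(V_p)$ a coprime linear group on $V_p$, so Theorem~\ref{H-P} yields $a_p,b_p\in V_p$ whose joint stabilizer in $G$ is exactly $C_G(V_p)$. Setting $a=(a_p)_p$ and $b=(b_p)_p$ gives $C_G(a)\cap C_G(b)=\bigcap_p C_G(V_p)=1$; this is the step where the distinct characteristics are harmlessly combined, by reusing the same two ``slots'' across all primes. Finally, lifting along $\rho$ (choosing $x\in\rho^{-1}(a)$ and $y\in\rho^{-1}(b)$) and using $C_G(x)\subseteq C_G(\rho(x))$ produces the desired pair inside $K$.

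The hard part will be the non-solvable case, where the clean surjection $\rho$ onto a product of vector spaces is unavailable: a single element of $K$ cannot in general realize a prescribed tuple of images in the chief factors, so the prime-by-prime construction does not lift verbatim. Here I expect to need a separate treatment of the layer $E(K)$, a central product of quasisimple groups on which $G$ permutes the components; controlling the joint stabilizers there relies on the solvability of the outer automorphism groups of simple groups (Schreier's conjecture) rather than on Theorem~\ref{H-P}. Reconciling this nonabelian contribution with the linear construction above, and arranging that the two chosen elements can be taken simultaneously in $K$, is the main obstacle; by contrast the module case and the combination trick are the routine parts.
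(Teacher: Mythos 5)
Your first paragraph (the index inequality turning a base of size two into the bound $|C_G(x)|\le\sqrt{|G|}$) and your solvable-case machinery are correct: the facts that $C_G(F(K))=1$ for a faithful coprime action on a solvable group, that a coprime automorphism trivial on $K_p/\Phi(K_p)$ is trivial on $K_p$, and the trick of reusing the same two ``slots'' across all primes before lifting along the $G$-equivariant surjection $\rho$ all hold up. But the proposal is not a proof: the non-solvable case, which you explicitly defer as ``the main obstacle,'' is a genuine gap, and the road you sketch for it --- a separate analysis of the layer $E(K)$, the permutation action on components, Schreier's conjecture --- is precisely the hard route this corollary is designed to avoid. Once you commit to reducing along $F^*(K)$, all the difficulty concentrates in the quasisimple part: there is no module structure there, Theorem~\ref{H-P} does not apply, and nothing in your outline indicates how to produce two elements of $E(K)$ with trivial joint stabilizer, nor how to merge them with the elements found in $F(K)$ into a single pair in $K$ (the central product $F(K)E(K)$ is not a direct product, so the coordinate-wise combination you use for the primes has no analogue).

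The missing idea is the lemma of Hartley and Turull (Theorem 3.31 of \cite{Isaacsfinitegroup}): a coprime action of $G$ on an \emph{arbitrary} finite group $K$ is permutation isomorphic to a coprime action of $G$ on an abelian group, so all point stabilizers --- hence the desired inequality --- are unaffected by replacing $K$ with an abelian group. This is exactly how the present paper proceeds in proving the sharper Corollary~\ref{new} (the statement you were asked about is itself quoted from \cite{HalasiPodoski} without proof, but the proof of Corollary~\ref{new} contains the whole argument): reduce to abelian $K$ by Hartley--Turull, apply Theorem~\ref{H-P} to get $v,w$ with $C_G(v)\cap C_G(w)=1$, and conclude via $|C_G(v)|^2\le |C_G(v)||C_G(w)|=|C_G(v)C_G(w)|\,|C_G(v)\cap C_G(w)|\le |G|$, which is your index inequality in product form. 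Since abelian groups are solvable, the case you could not handle simply never arises; the correct fix is not to complete the $E(K)$ analysis but to switch reductions, and in particular no input of CFSG strength, such as Schreier's conjecture, is needed anywhere.
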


Actually, it suffices to prove that in the situation of the above corollary,
the upper bound $\sqrt{|G|}$ cannot be attained unless $G$ is trivial.
To do this, we need a variation of ~\cite[Theorem 3.34]{Isaacsfinitegroup},
which was first stated in ~\cite[Lemma 2.1]{DOLFINavarro}
by simply referring to the proof of that theorem.
In fact, the argument was embedded in the proof of ~\cite[Theorem 3.34]{Isaacsfinitegroup}), when Isaacs tried to reprove a result about coprime orbit sizes (the main result of ~\cite{Yuster}). Since the argument is nice and important, we include it here.

\begin{lem}\label{isaacs}
Let $G$ be a finite group acting via automorphisms on a finite abelian group $V$ (written additively), and suppose that $(|G|,|V|)=1$.
If $G = C_G(v)C_G(w)$ for some elements $v,w\in V$,
then $C_G(v + w) = C_G(v)\cap C_G(w)$.
\end{lem}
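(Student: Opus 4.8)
One inclusion is free of charge. If $g$ fixes both $v$ and $w$, then since $g$ acts by automorphisms we have $g(v+w)=g(v)+g(w)=v+w$, so $C_G(v)\cap C_G(w)\subseteq C_G(v+w)$. This uses neither coprimeness nor the factorization hypothesis, so the entire content lies in the reverse inclusion $C_G(v+w)\subseteq C_G(v)\cap C_G(w)$. Throughout write $H=C_G(v)$ and $K=C_G(w)$, so that $G=HK$, and put $D=H\cap K$.

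The plan is to reduce the reverse inclusion to a statement about the $G$-submodules generated by $v$ and $w$. First I would normalize: coprimeness gives the $G$-invariant decomposition $V=C_V(G)\oplus[V,G]$, and replacing $v,w$ by their $[V,G]$-components alters none of $C_G(v)$, $C_G(w)$, $C_G(v+w)$ (the $C_V(G)$-parts are fixed by all of $G$) nor the identity $G=HK$. Hence I may restrict the action to $[V,G]$ and assume $C_V(G)=0$. Now set $M=\langle Gv\rangle$ and $M'=\langle Gw\rangle$. The key reformulation is that for any $g\in C_G(v+w)$ the element $a:=g(v)-v=w-g(w)$ lies in $M\cap M'$, because $g(v),v\in M$ and $g(w),w\in M'$. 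Thus, once I prove $M\cap M'=0$, I get $a=0$, that is $g(v)=v$ and $g(w)=w$, which is precisely $g\in C_G(v)\cap C_G(w)$; this finishes the proof.

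So the crux, which I expect to be the main obstacle, is to show $M\cap M'=0$. Two facts drive this. First, since $v\in C_V(H)$ one has $Gv=Kv$ (write $hk=k'h'$ using $G=HK=KH$ and note $h'(v)=v$), so $M=\langle Kv\rangle$, and symmetrically $M'=\langle Hw\rangle$. Second, the factorization $G=HK$ yields the operator identity $\pi_G=\pi_K\pi_H$ between the coprime averaging projections onto fixed points, since both sides equal $\frac{1}{|G|\,|D|}\sum_{h,k}kh$; this is the one place the hypothesis $G=HK$ is genuinely used.

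To prove $M\cap M'=0$, suppose not and let $W=M\cap M'$, a nonzero $G$-submodule with $C_W(G)=0$. Using Maschke I would split $M=W\oplus N$ as $G$-modules and let $v_0$ be the $W$-component of $v$; applying the $G$-linear projection to the generator $v$ shows $v_0$ generates $W$, and being the image of the $H$-fixed vector $v$ it satisfies $v_0\in C_W(H)$. Then $\pi_G(v_0)=\pi_K\pi_H(v_0)=\pi_K(v_0)$ lies in $C_W(G)=0$, so $v_0\in[W,K]$; as $W=\langle Kv_0\rangle\subseteq[W,K]$ this forces $W=[W,K]$, i.e. $C_W(K)=0$. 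Running the symmetric argument on $w$ produces a generator $w_0$ of $W$ with $w_0\in C_W(K)=0$, whence $W=\langle Gw_0\rangle=0$, a contradiction. The two delicate points to execute carefully are the reduction to $C_V(G)=0$ and the verification that the $W$-component of a module generator is again a generator of the correct fixed-point type; granting these, the averaging identity closes the argument.
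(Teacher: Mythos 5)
Most of your argument is sound in the full generality of the lemma: the easy inclusion, the reduction to $C_V(G)=0$ via $V=C_V(G)\oplus[V,G]$, the identities $M=\langle Kv\rangle$ and $M'=\langle Hw\rangle$, the operator identity $\pi_G=\pi_K\pi_H$ (all of these only need coprime averaging, which works modulo the exponent of $V$), and the reduction of the lemma to the claim $M\cap M'=0$ via the element $g(v)-v=w-g(w)$. The gap is the step ``Using Maschke I would split $M=W\oplus N$ as $G$-modules'' (and its mirror for $M'$). The lemma is stated for a finite \emph{abelian} group $V$, not an elementary abelian one, and for coprime actions on general abelian groups complete reducibility is simply false: let $C_2$ act on $\mathbb{Z}/9\mathbb{Z}$ by inversion; the action is coprime and fixed-point-free, yet the invariant subgroup $3\mathbb{Z}/9\mathbb{Z}$ has no complement whatsoever. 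A $G$-invariant subgroup acquires a $G$-invariant complement (by averaging a retraction) only if it is already a direct summand as an abelian group, and there is no a priori reason your $W=M\cap M'$ is one. So as written your proof establishes the lemma only for elementary abelian $V$, and that restricted case is not enough for the paper: in Corollary \ref{new} the Hartley--Turull reduction yields a general abelian group (indeed, if $K$ is already abelian, say cyclic of order $p^2$, it yields $K$ itself).

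The good news is that your target $M\cap M'=0$ is true in the stated generality, so the strategy is repairable, but only with a tool that survives non-semisimplicity. One repair: reduce to $V$ a $p$-group; then $M=\Lambda v=\Lambda\pi_H v$ is a quotient of the left ideal $\Lambda\pi_H$, where $\Lambda=(\mathbb{Z}/p^a\mathbb{Z})[G]$ and $\pi_H$ is an idempotent of $\Lambda$, so every composition factor of $M$ is a constituent of the permutation module $\mathbb{F}_p[G/H]$, and similarly every composition factor of $M'$ is a constituent of $\mathbb{F}_p[G/K]$; since $G=HK$ means there is exactly one $(H,K)$-double coset, $\dim\operatorname{Hom}_G(\mathbb{F}_p[G/H],\mathbb{F}_p[G/K])=1$, so (using semisimplicity of $\mathbb{F}_p[G]$, where Maschke \emph{is} legitimate) the only common constituent is the trivial one, which is excluded because $C_V(G)=0$ and fixed points are exact under coprime action; hence $M\cap M'=0$. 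Note that the paper's own proof avoids this entire issue: Isaacs' argument never decomposes any module, but instead applies Glauberman's lemma to the coset $W+v$ with $W=\langle w^G\rangle$ to find a fixed point of its stabilizer, and then uses the single averaging homomorphism $x\mapsto\sum_{g\in C_G(v)}xg$; that route is insensitive to the exponent of $V$, which is precisely why it handles arbitrary finite abelian groups.
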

\begin{proof}
Write $u=v+w$, and obverse that $C_G(v)\cap C_G(w)\subseteq C_G(u)$,
so we need to prove the reverse containment.
By symmetry between $v$ and $w$, it suffices to show that $C_G(u)\subseteq C_G(v)$.

Let $W=\langle w^G\rangle$ be the subgroup of $V$ generated by the $G$-orbit
$w^G=\{wg|g\in G\}$. Then $W$ is $G$-invariant. Note that $u\in W+v$ and thus $W+v=W+u$,
which implies that $C_G(u)$ stabilizes the coset $W+v$.
Let $H$ be the set-wise stabilizer in $G$ of $W+v$, so that $C_G(u)\subseteq H$.
To complete the proof, therefore, it suffices to show that $H\subseteq C_G(v)$.
Of course, we have $C_G(v)\subseteq H$.

Now consider the permutation actions on the set $W+v$ of $H$ and $W$. In this situation, Glauberman's lemma (see Lemma 3.24 of \cite{Isaacsfinitegroup}) applies, and we conclude that $W+v$ contains a $H$-invariant element $s$.
So $v\in W+s$, and we can write $v=t+s$ for some element $t\in W$,
which implies that $H\subseteq C_G(v)$ precisely when $H\subseteq C_G(t)$.
In what follows, therefore, we need only prove that $H$ leaves $t$ invariant.

We can define a map $\varphi:W\to V$ by setting
$$\varphi(x)=\sum_{g\in C_G(v)}xg,$$
and observe that $\varphi$ is a group homomorphism from $W$ into $V$.
By hypothesis, $G = C_G(w)C_G(v)$, so $C_G(v)$ acts transitively on the set $w^G$.
If $x\in w^G$, then as $g$ runs over the elements of $C_G(v)$,
each element of $w^G$ occurs equally often in the form $xg$,
and we can deduce that $\varphi(x)$ is a multiple of the sum of the elements of the orbit $w^G$.
This proves that $\varphi(x)$ is $G$-invariant for all $x\in w^G$, and hence $\varphi(W)\subseteq C_V(G)$. In particular, we have $\varphi(t)\in \varphi(W)$ is $G$-invariant.
However, since $C_G(v)\subseteq H$ fixes $v$ and $s$, it follows that $t=v-s$ is $C_G(v)$-invariant,
and thus $\varphi(t)=|C_G(v)|t$.
Note that $|C_G(v)|$ is coprime to $|V|$, and we conclude that $t$ is also a multiple of $\varphi(t)$. It follows that $t$ is $H$-invariant, as desired.
\end{proof}

As promised, we can now improve Corollary \ref{D-N}.
\begin{cor}\label{new}
If $G$ is a nontrivial finite group acting faithfully on a finite group $K$ and $(|G|,|K|)=1$,
then there exists $x\in K$ such that $|C_G(x)|<\sqrt{|G|}$.
\end{cor}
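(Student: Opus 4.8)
The plan is to locate, via Halasi--Podoski, two elements whose centralizers intersect trivially, and then to show that the extremal bound $\sqrt{|G|}$ forces the factorization $G=C_G(v)C_G(w)$, at which point Lemma~\ref{isaacs} supplies an element with trivial centralizer. First I would reduce to the case that $G$ acts faithfully on a finite \emph{abelian} group, so that Lemma~\ref{isaacs} is available. Assume for the moment that $K$ is solvable (the nonsolvable case is discussed below). Since the action is coprime and faithful, $G$ acts faithfully on $F(K)$, and hence on the abelian section $V=F(K)/\Phi(F(K))$, which is elementary abelian in each prime. Writing $V=\bigoplus_p V_p$ as the direct sum of its $G$-invariant Sylow subgroups, I apply Theorem~\ref{H-P} to the faithful coprime action of $G/C_G(V_p)$ on the vector space $V_p$ to obtain $v_p,w_p\in V_p$ with $C_G(v_p)\cap C_G(w_p)=C_G(V_p)$; setting $v=\sum_p v_p$ and $w=\sum_p w_p$, the directness of the decomposition gives $C_G(v)\cap C_G(w)=\bigcap_p C_G(V_p)=C_G(V)=1$.

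With such $v,w$ in hand the argument is short. Since $C_G(v)\cap C_G(w)=1$ we have $|C_G(v)C_G(w)|=|C_G(v)|\,|C_G(w)|$, a quantity that is at most $|G|$ because $C_G(v)C_G(w)\subseteq G$. If $C_G(v)C_G(w)\neq G$, this product is strictly less than $|G|$, so one of $|C_G(v)|,|C_G(w)|$ is strictly less than $\sqrt{|G|}$ and the corresponding element is the one we seek. Otherwise $G=C_G(v)C_G(w)$, which is exactly the hypothesis of Lemma~\ref{isaacs} with $V$ abelian; that lemma then yields $C_G(v+w)=C_G(v)\cap C_G(w)=1$, and since $G\neq 1$ we get $|C_G(v+w)|=1<\sqrt{|G|}$. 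In either case I have produced an element $\bar z\in V$ whose centralizer has order strictly below $\sqrt{|G|}$. Finally, because the action is coprime, the fixed-point formula $C_{F(K)/\Phi(F(K))}(H)=C_{F(K)}(H)\Phi(F(K))/\Phi(F(K))$ lets me lift $\bar z$ to some $z\in F(K)\leq K$ with $C_G(z)=C_G(\bar z)$, giving the required $z\in K$ with $|C_G(z)|<\sqrt{|G|}$.

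The hard part will be the reduction to an abelian situation when $K$ is not solvable, since then the natural faithful section---built from the generalized Fitting subgroup $F^*(K)$, where $C_K(F^*(K))\leq F^*(K)$ keeps the action faithful---involves nonabelian components and Lemma~\ref{isaacs} no longer applies verbatim. Here I would argue along the lines of the proof of Corollary~\ref{D-N}: treat the abelian part $F(K)/\Phi(F(K))$ exactly as above, while showing that the presence of nonabelian components only makes centralizers smaller, so that the bound $\sqrt{|G|}$ is strictly beaten on an appropriate section and the conclusion can again be pulled back to $K$ by the same coprime lifting. Getting this bookkeeping exactly right---controlling the centralizer orders across the several factors and sections while preserving both faithfulness and coprimality---is the delicate point; the solvable case already contains the essential new idea, namely the use of Lemma~\ref{isaacs} to dispose of the extremal case $G=C_G(v)C_G(w)$.
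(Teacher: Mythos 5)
Your second paragraph is exactly the paper's proof: apply Theorem~\ref{H-P} to get $v,w$ with $C_G(v)\cap C_G(w)=1$, observe that $|C_G(v)|\,|C_G(w)|=|C_G(v)C_G(w)|\le |G|$, and in the extremal case $G=C_G(v)C_G(w)$ invoke Lemma~\ref{isaacs} to produce the element $v+w$ with trivial centralizer. The gap is in your reduction to an abelian group, and you have named it yourself: your construction (pass to $F(K)$, then to $F(K)/\Phi(F(K))$, then decompose by primes) only works when $K$ is solvable, and your plan for nonsolvable $K$ is not a proof. The assertion that nonabelian components ``only make centralizers smaller'' is precisely what would have to be established, and it is unclear how: the components of $F^*(K)$ are quasisimple, so neither Theorem~\ref{H-P} (which needs a module) nor Lemma~\ref{isaacs} (which needs an abelian group) applies to them. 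Moreover, even the faithfulness of the $G$-action on $F^*(K)$ does not follow formally from $C_K(F^*(K))\le F^*(K)$: that inclusion controls centralizers inside $K$, not the kernel of the action of the external group $G$, and transferring it is itself a nontrivial coprime-action argument. Note also that Feit--Thompson cannot rescue your route: if $K$ is nonsolvable then it is $G$ that has odd order and is solvable, whereas your reduction needs solvability of $K$.

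The missing ingredient, which is how the paper disposes of the entire reduction in one sentence, is the Hartley--Turull lemma (Theorem 3.31 of \cite{Isaacsfinitegroup}): if $G$ acts coprimely on $K$, then there is an abelian group $V$ with a $G$-action such that the permutation actions of $G$ on $K$ and on $V$ are isomorphic. An equivariant bijection preserves both the kernel of the action and the centralizer orders of elements, so faithfulness and the statement to be proved transfer verbatim to $V$; and the lemma requires no solvability hypothesis on $K$, since coprimality of $|G|$ and $|K|$ together with Feit--Thompson guarantees that one of the two groups is solvable, which is all its proof needs. With that replacement, your core argument is complete and coincides with the paper's. Two smaller remarks on your solvable case: the faithfulness of $G$ on $F(K)$ and then on $F(K)/\Phi(F(K))$ are standard but nontrivial coprime-action facts that would need proof or citation; and your final lift is easier than you make it, since for \emph{any} preimage $z$ of $\bar z$ one has $C_G(z)\le C_G(\bar z)$, so no fixed-point formula is required.
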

\begin{proof}
As usual, we may assume without loss that $K$ is an abelian group (written additively) by a lemma of B. Hartley and A. Turull (see Theorem 3.31 of \cite{Isaacsfinitegroup}, for example).
Then by Theorem ~\ref{H-P}, there exist $v$, $w \in V$ such that $C_G(v) \cap C_G(w) = 1$,
and we may assume $|C_G(v)| \leq |C_G(w)|$.
Thus we have
$$|C_G(v)|^2 \leq |C_G(v)| |C_G(w)| = |C_G(v) C_G(w)||C_G(v) \cap  C_G(w)| = |C_G(v) C_G(w)| \leq |G|.$$
If $|C_G(v) C_G(w)| < |G|$, then $|C_G(v)|^2 < |G|$ and the result is clear. If $C_G(v) C_G(w) = G$, then  $C_G(v + w) = C_G(v) \cap C_G(w) = 1$ by Lemma ~\ref{isaacs},
and the result is again clear.
\end{proof}

It is easy to see that by Corollary \ref{new},
case (2) in Theorem \ref{pi} indeed cannot occur, and this confirms the conjecture. We now have the following.

\begin{thm}\label{pinew}
Let $\pi$ be a set of primes, and let $G$ be a finite $\pi$-separable group. If $\varphi(1)^2$ divides $|G : \ker \varphi|$ for all $\varphi \in I_{\pi} (G)$, then $G$ has a normal Hall $\pi$-complement $N$ and $G/N$ is nilpotent.
\end{thm}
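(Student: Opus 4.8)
The plan is to feed our hypotheses into the dichotomy of Theorem~\ref{pi} and then use the strict bound of Corollary~\ref{new} to excise the exceptional branch. Under the assumption that $\varphi(1)^2 \mid |G:\ker\varphi|$ for all $\varphi \in I_{\pi}(G)$, Theorem~\ref{pi} tells me that either conclusion~(1) holds --- $G$ has a normal Hall $\pi$-complement $N$ with $G/N$ nilpotent --- or the configuration of~(2) occurs. Since~(1) is precisely the assertion of Theorem~\ref{pinew}, everything reduces to showing that~(2) is impossible.

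To do this I would argue by contradiction, assuming~(2) with the prime $p \in \pi$ and the $\pi'$-group $K$ acting faithfully on the characteristic-$p$ module $V$, where $K \ne 1$. Writing $V$ additively, $K$ acts faithfully via automorphisms on the finite abelian $p$-group $V$; because $|V|$ is a power of $p \in \pi$ while $|K|$ is a $\pi'$-number, the orders $|K|$ and $|V|$ are coprime. Corollary~\ref{new}, invoked with $K$ as the acting group and $V$ as the group it acts on, then yields a vector $v \in V$ with $|C_K(v)| < \sqrt{|K|}$. This contradicts the defining feature of case~(2), that $|C_K(v)| \ge \sqrt{|K|}$ for every $v \in V$. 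Hence~(2) cannot hold and~(1) follows.

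I expect no serious obstacle here; the argument is short once Corollary~\ref{new} is available, and the only points requiring care are mild. First, one must match the roles correctly when applying Corollary~\ref{new}: the group whose centralizers are being bounded is the acting group, so it is $K$ rather than $V$, and the coprimality hypothesis is furnished automatically by $p \in \pi$. Second, and more to the point, the strictness of the inequality in Corollary~\ref{new} is essential: the weaker bound $|C_K(v)| \le \sqrt{|K|}$ of Corollary~\ref{D-N} is compatible with equality holding for all $v$, which is exactly the boundary situation that case~(2) describes, so only the improvement to a \emph{strict} inequality rules it out. (The degenerate reading $K=1$ makes $VK \cong V$ an abelian $p$-section and poses no obstruction, so it may be harmlessly absorbed into~(1).)
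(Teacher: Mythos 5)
Your proposal is correct and is essentially the paper's own argument: the paper proves Theorem~\ref{pinew} by exactly this route, noting (in one sentence) that Corollary~\ref{new} rules out case~(2) of Theorem~\ref{pi}, with the coprimality of $|K|$ and $|V|$ and the strictness of the inequality playing precisely the roles you describe. Your explicit attention to the degenerate case $K=1$ (where Corollary~\ref{new} does not apply) is a point the paper glosses over, but it matches the intended reading of case~(2) and causes no divergence in substance.
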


\section*{Acknowledgement}\label{sec:Acknowledgement}
This work was partially supported by NSFC (Grant Nos. 11671063 and 11671238), the Natural Science Foundation of CSTC (cstc2018jcyjAX0060), and a grant from the Simons Foundation (No 499532, to Y. Yang). 


\begin{thebibliography}{19}
\bibitem{CCLT} {X. Chen, J.\,P. Cossey, M. Lewis, and H.\,P. Tong-Viet}, `Blocks of small defect in alternating groups and squares of Brauer character degrees', J. Group Theory 20 (2017), no. 6, 1155-1173.

\bibitem {DOLFINavarro} {S. Dolfi and G. Navarro}, `Large orbits of elements centralized by a Sylow subgroup', Arch. Math. 93 (2009), 299-304.


\bibitem {HalasiPodoski} {Z. Halasi and K. Podoski}, `Every coprime linear group admits a base of size two', Trans. Amer. Math. Soc. 368 (2016), no. 8, 5857-5887.

\bibitem{Isaacsfinitegroup} {I.\,M. Isaacs}, Finite Group Theory, AMS, 2008.

\bibitem{Yuster} {T. Yuster}, `Orbit sizes under automorphism actions in finite groups', {J. Algebra} 82 (1982), 342-352.
\end{thebibliography}
\end{document}